\def\ulamek#1#2{\mbox{\normalfont$\frac{#1}{#2}$}}
\DeclareMathOperator{\okr}{{\stackrel{{\scriptscriptstyle{\mathsf{def}}}}{=}}}
\DeclareMathOperator{\D}{d\!}
\DeclareMathOperator{\E}{e} 
\DeclareMathOperator{\I}{i}
   \DeclareMathOperator{\RE}{\mathfrak{Re}}
\theoremstyle{plain}
\newtheorem{thm}{Theorem}
\theoremstyle{definition}
\newtheorem{exa}[thm]{{\it Example}}
\theoremstyle{remark}
\newtheorem{rem}[thm]{{\it Remark}}
\begin{document} 

\begin{center}
\large{This paper is now published (in revised form) in Fract. Calc. Appl. Anal. {\bf 22}(5) (2019) 1284--1306, \\
\medskip
DOI: 10.1515/fca-2019-0068; \\
\ \\ \ \\ \ \\
\medskip
and is available online at http://www.degruyter.com/view/j/fca .}
\end{center}

\ \\ \ \\

\title[Some results on the complete monotonicity of the Mittag-Leffler functions of Le Roy type.]{\bigskip\bigskip\bigskip Some results on the complete monotonicity of the Mittag-Leffler functions of \\ Le Roy type.}

\author{K. G\'{o}rska}
\email{katarzyna.gorska@ifj.edu.pl}
\affiliation{H. Niewodnicza\'{n}ski Institute of Nuclear Physics, Polish Academy of Sciences, 
ul.Eljasza-Radzikowskiego 152, PL 31342 Krak\'{o}w, Poland}

\author{A. Horzela}
\email{andrzej.horzela@ifj.edu.pl}
\affiliation{H. Niewodnicza\'{n}ski Institute of Nuclear Physics, Polish Academy of Sciences, 
ul.Eljasza-Radzikowskiego 152, PL 31342 Krak\'{o}w, Poland}

\author{R. Garrappa}
\email{roberto.garrappa@uniba.it}
\affiliation{Department of Mathematics, University of Bari "Aldo Moro", \\ Via Orabona n. 4 - 70125 Bari, Italy - Member of the INdAM Research group GNCS}

\begin{abstract}

The paper by R. Garrappa, S. Rogosin, and F. Mainardi, entitled {\em On a generalized three-parameter Wright function of the Le Roy type} and published in [Fract. Calc. Appl. Anal. {\bf 20} (2017) 1196-1215], ends up leaving the open question concerning the range of the parameters $\alpha, \beta$ and $\gamma$ for which Mittag-Leffler functions of Le Roy type $F_{\alpha, \beta}^{(\gamma)}$ are completely monotonic. Inspired by the 1948 seminal H. Pollard's paper  which provides the proof of the complete monotonicity of the one parameter Mittag-Leffler function, the Pollard approach is used to find the Laplace transform representation of $F_{\alpha, \beta}^{(\gamma)}$ for integer $\gamma = n$ and rational $0 < \alpha \leq 1/n$. In this way it is possible to show that  Mittag-Leffler functions of Le Roy type are completely monotone for $\alpha = 1/n$ and $\beta \geq (n+1)/(2n)$ as well as for rational $0 < \alpha \leq 1/2$, $\beta = 1$ and $n=2$.  For further integer values of $n$ the complete monotonicity is tested numerically for rational $0< \alpha < 1/n$ and various choices of $\beta$. The obtained results suggest that for the complete monotonicity the condition $\beta \geq (n+1)/(2n)$ holds for any value of $n$. 
\end{abstract}
\keywords{Mittag-Leffler functions of the Le Roy type, completely monotonic functions, special functions}

\maketitle
 
\section{Introduction}\label{sec1}
\vspace*{-16pt}
 
 \definecolor{red}{rgb}{1, 0, 0}

The Mittag-Leffler (ML) functions of the Le Roy type (denoted here, for shortness, as {\em MLR functions}) are defined in \cite{RGarrappa17} as
\begin{multline*} 
F_{\alpha, \beta}^{(\gamma)}(z) = \sum_{r=0}^{\infty} \frac{z^{r}}{[\Gamma(\beta + \alpha r)]^{\gamma}}, \\ z\in\mathbb{C}, \quad \alpha, \beta, \gamma\in\mathbb{C}, \,\, \Re(\alpha) > 0,
\end{multline*}
and treated as generalizations of the Mittag-Leffler (ML) function $E_{\alpha, \beta}(z)=\sum_{r=0}^{\infty} z^{r/\Gamma(\beta + \alpha r)}$. These are entire functions of the complex variable $z$ for all values of the parameters such that $\Re(\alpha)>0$, $\beta \in \mathbb{R}$, and $\gamma >0$.

In this paper we consider the MLR functions {\em in the case of integer positive $\gamma=n$},
\begin{multline}\label{27/02-1}
F_{\alpha, \beta}^{(n)}(-x) = \sum_{r=0}^{\infty} \frac{(-x)^{r}}{[\Gamma(\beta + \alpha r)]^{n}}, \quad x\in\mathbb{R}, \\ \alpha >0, \,\, \beta\in\mathbb{R}, \,\, n = 1, 2, \ldots,
\end{multline}
For particular choices of the parameters they return more widely-known special functions: the one parameter ML function $E_{\alpha}(-x)$ and the two parameter ML function $E_{\alpha, \beta}(-x)$ for $n = 1$ and $n = \beta = 1$ respectively; the Bessel function of the first kind $J_{0}(2\sqrt{x})$ when $\alpha = \beta = 1$ and $n=2$; the standard Le Roy function when $\alpha = 1$, $\beta = 2$ and arbitrary integer $n$. 

For many years several efforts have been payed to study the complete monotonicity (CM) of the Mittag-Leffler functions which has been, however, proved just for some of the mentioned special cases but not for the more general case for which CM has been only conjectured \cite{RGarrappa17}. 

We recall that a function $f(x)$, $x\in D$, is CM if $f \in {\mathcal C}^{\infty}(D)$ and all its $n$-th derivatives satisfy the inequalities \cite{RLSchilling12,DVWidder46}
\begin{equation}\label{5/03-1}
(-1)^{n} f^{(n)}(x) \geq 0, \qquad n = 0, 1, 2, \ldots.
\end{equation}


According to the Bernstein theorem \cite[Theorem 12a]{DVWidder46}, the key property of CM functions is that the are uniquely representable as the Laplace transforms of non-negative weight functions $F(p)$ supported on $p\in[0, \infty)$, i.e.
\begin{equation}\label{5/03-2}
f(x) = \int_{0}^{\infty} \E^{- x p} F(p) \D p.
\end{equation}

The CM character of the one-parameter ML function $E_{\alpha}(-x)$, for $0 < \alpha < 1$, was first shown in \cite{HPollard48} and results for the two-parameter ML function were instead presented in \cite{VVAnh03, KSMiller97, WRSchneider96} for $0 < \alpha < 1$ and $\beta \geq \alpha$. In the recent paper \cite{KGorska18a} it is shown that also the three parameter ML function $E_{\alpha, \beta}^{\gamma}(-x)$ (often referred to as the Prabhakar function) is CM when $0 < \alpha < 1$, $\gamma > 0$, and $\beta \geq \alpha\gamma$, thus extending some results previously investigated in  \cite{FMainardi15, Tomovski} for the special case of the compound function $t^{\beta-1}E_{\alpha, \beta}^{\gamma}(-t^{\alpha})$ under the conditions $0 < \alpha < 1$, $\gamma > 0$, and $0 < \alpha\gamma \leq \beta \leq 1$. Additionally, in \cite{KSMiller01} the CM character was studied for more familiar functions, namely the negative power, the exponential, and the modified Bessel functions of the first and second kind.

Inspired by the fact that the special cases of the MLR functions for $n = 1$ are CM we expect that under some conditions the MLR functions should be CM for $n\neq 1$ as well. Indeed, this is the essence of the unanswered question which closes the paper \cite{RGarrappa17}: 

{\em To find conditions on the parameters $\alpha$, $\beta$, $\gamma$ for which the function $F_{\alpha, \beta}^{(\gamma)}(-x)$, $0 < x < -\infty$, is completely monotone}. 

In this paper, by exploiting the technique proposed 70 years ago in \cite{HPollard48}, we  provide a first answer to this question limited to the case of integer $\gamma = n$ and rational $0<\alpha < 1$ .  

The paper is organized as follows. In Sec. \ref{sec2} we recall various forms of the MLR function often refereed to, in the physical and mathematical literature, as $\alpha$-Mittag-Leffler function, multi-index Mittag-Leffler function, and the Mittag-Leffler function of vector index. A novelty is that we introduce {{a representation}} of the MLR function {{in terms of a}} finite sum of generalized hypergeometric functions. The Laplace transform evaluated for this type of MLR function yields s known formula \cite{RGarra13, RGarrappa17} which allows to reduce the value of the parameter $n$ labeling the MLR function to $n-1$ (see Appendix \ref{appC}). In Sec. \ref{sec3} we invert this Laplace transform and next use it $n$ times. That enables us to represent the MLR function with the rational parameter $\alpha=l/k$ ($0 < l/k < 1$) as the $n$ times nested integral of the two parameters ML function $E_{l/k, \beta}(-x)$. Using the contour integral representation of $E_{l/k, \beta}(-x)$ we arrive at the Laplace transform representation of the MLR function with the weight function $m_{l/k, \beta}(n; y)$. The non-negativity of $m_{l/k, \beta}(n; y)$ for $\beta \geq (n+1)/(2n)$ and $l n = k$ as well as for $n=2$, $2l < k$ and $\beta = 1$ is shown analytically in Sec. \ref{sec4} and illustrated for special values of parameters for which $m_{l/k, \beta}(n; y)$ can be transformed into known special functions. We also check,  using numerical evaluations, the nonnegativity of $m_{l/k, \beta}(n; y)$ for various values of $\beta$, $l/k = 3/7, 3/10$, and $n = 2, 3$. It turns out that the condition $\beta \geq (n+1)/(2n)$ is still in the game and if it is kept the function $m_{l/k, \beta}(n; y)$ is non-negative. The paper is concluded in Sec. \ref{sec5}. The appendixes \ref{appA} and \ref{appB} contain the list of the used formulas related to the high transcendental functions, especially the Meijer $G$ and the generalized hypergeometric functions. 

\section{The variety of the MLR functions}\label{sec2}

The MLR function for real $x$, $\alpha = \nu_{\rm G}$, $\beta = \gamma_{G}$, and $n=\alpha_{G} + 1$ (the subscript $G$ is used to emphasize the reference \cite[Def. 3.1]{RGarra13} from which its definition and formula are taken) is called the $\alpha$-Mittag-Leffler function $E_{\alpha_{\rm G}; \nu_{\rm G}, \gamma_{\rm G}}(x)$. For integer $\alpha_{\rm G}$ the $\alpha$-ML function is expressed as the generalized Wright function according to \cite[Eq. (3.7)]{RGarra13}
\begin{equation}\label{12/06-3}
F_{\alpha, \beta}^{(n+1)}(x) = E_{n; \alpha, \beta}(x) =  {_{1}\Psi_{n+1}}\bigg[x\Big\vert {(1,1) \atop \underbrace{(\beta, \alpha), \ldots, (\beta, \alpha)}_{n+1 \,\,\, \text{times}}}\bigg],  
\end{equation}
where
\begin{equation}\label{12/06-4}
{_{p}\Psi_{q}}\bigg[z\Big\vert {(a_{1}, A_{1}), \ldots, (a_{p}, A_{p}) \atop (b_{1}, B_{1}), \ldots, (b_{q}, B_{q})}\bigg] = \sum_{r=0}^{\infty} \frac{z^{r}}{r!} \frac{\prod_{i=1}^{p} \Gamma(a_{i} + A_{i} r)}{\prod_{i=1}^{q} \Gamma(b_{i} + B_{i} r)}
\end{equation}
with $a_{i}, b_{i}\in\mathbb{C}$, $A_{i}, B_{i} > 0$, and complex $z$. The MLR function can be also represented by the multi-index Mittag-Leffler function \cite{VKiryakova10} known also as the ML function of vector index \cite{YuLuchko99}. In \cite{VKiryakova10, YuLuchko99} this function is expressed either as the series or the Fox $H$ function 
\begin{align}\label{12/06-2}
\begin{split}
E^{(m)}_{(1/\rho_{i}), (\mu_{i})} & = \sum_{r=0}^{\infty} \frac{z^{r}}{\Gamma(\mu_{1} + r/\rho_{1}) \cdots \Gamma(\mu_{m} + r/\rho_{m})} \\
& = H^{1, 1}_{1, m+1}\bigg[-z\Big\vert {(0, 1) \atop (0, 1), (1- \mu_{i}, 1/\rho_{i})^{m}_{1}}\bigg].
\end{split}
\end{align} 
{{whose main properties are listed in the Appendix \ref{appA}}}.

The asymptotic behaviour of the MLR function for {{complex arguments}} as well as its integral representations in the complex domain {{are discussed}} in \cite{RGarrappa17}. Moreover, in \cite{TPogany18}, it has been recently {{given}} the integral representation of the MLR function for $\alpha$, $\beta$, $\gamma, x > 0$, and $\alpha + \beta \geq x_{0}$ where $x_{0}$ is the abscissa of the minimum of the Gamma function. This representation reads
\begin{multline*}
F_{\alpha, \beta}^{(\gamma)}(x) = \frac{1}{[\Gamma(\beta)]^{\gamma}} + \frac{x}{[\Gamma(\alpha+\beta)]^{\gamma}} + \frac{x^{2}}{1-x} + \\ \frac{\gamma x}{x-1} \int_{1}^{\infty} x^{\lfloor(\Gamma^{-1}(t) - \beta)/\alpha\rfloor} \frac{\D t}{t^{\gamma+1}},
\end{multline*}
where $\Gamma^{-1}$ is defined on the increasing branch of the inverse Gamma function in the right half-plane and $[x]$ denotes the integer part of a real $x$.

Another possibility to represent the MLR function is to express it as a finite sum of generalized hypergeometric functions. This may be obtained  if one assumes the parameter $\alpha$ to be rational, $\alpha = l/k$, $0 < l/k < 1$,  and uses in the Eq. \eqref{27/02-1} the so-called splitting formula
\begin{equation}\label{6/06-1}
\sum_{r=0}^{\infty} a_{r} = \sum_{j=0}^{m-1} \sum_{r=0}^{\infty} a_{m r + j},
\end{equation} 
according to which the series of $a_{r}$ splits into $m$ sums with the terms $a_{mr}$, $a_{mr + 1}$, $\ldots$, $a_{mr + j}$. Thus, we can rewrite $F_{l/k, \beta}^{(n)}(z)$ as
\begin{equation}\label{6/06-2}
F_{\frac{l}{k}, \beta}^{(n)}(z) = \sum_{j=0}^{k-1} \frac{z^{j}}{[\Gamma(\beta + \frac{l}{k}j)]^{n}} \sum_{r=0}^{\infty} \frac{z^{kr}}{r!} \frac{(1)_{r}}{[(\beta + \frac{l}{k}j)_{lr}]^{n}},
\end{equation}
where in the second sum we have the Pochhammer symbols $(1)_{r}$ and $(\beta + l j/k)_{lr}$. The last of them, indexed by multiplication of integers $l$ and $r$, can be simplified into the Pochammer symbol of index $r$ according to 
\begin{equation}\label{12/06-4}
(\beta + \ulamek{l}{k}j)_{lr} = l^{r l}\prod_{i=0}^{l-1} \Big(\ulamek{\beta\, +\, l j/k}{l} + \ulamek{i}{l}\Big)_{r}. 
\end{equation}
The substitution of Eq. \eqref{12/06-4} into Eq. \eqref{6/06-2} enables {{us}} to use the series representation of the generalized hypergeometric function given by Eq. \eqref{14/03-A5} {{and obtain a formula}} involving $k$ functions ${_{1}F_{nl}}$ of the argument $z^{k}/l^{nl}$. Their upper (first) lists of parameters contain only one element equal to $1$, whereas the lower (second) lists of parameters contain $n l$ elements given by $n$ times repetition of $\Delta(l, \beta + lj/k)$ where the symbol $\Delta(r, \lambda)$ denotes the sequence of $r$ elements {{
\[
	\Delta(r, \lambda) := 
	\Bigl[ \frac{\lambda}{r}, \frac{\lambda + 1}{r}, \ldots, \frac{\lambda + r - 1}{r} \Bigr].
\]
}}
Consequently, we can express the MLR function, Eq. \eqref{6/06-2}, as 
\begin{align}\label{6/06-3}
\begin{split}
& F_{\frac{l}{k}, \beta}^{(n)}(z) = \sum_{j=0}^{k-1} \frac{z^{j}}{[\Gamma(\beta + \frac{l}{k}j)]^{n}} \\
& \times {_{1}F_{nl}}\bigg({1 \atop \underbrace{\Delta(l, \beta + \ulamek{l}{k}j), \ldots, \Delta(l, \beta + \ulamek{l}{k}j)}_{n\,\,\, \text{times}}}; \frac{z^{k}}{l^{nl}}\bigg).
\end{split}
\end{align}

\section{The MLR function $F_{\alpha, \beta}^{(n)}(-x)$ as the Laplace transform}\label{sec3}

All forms of the MLR function mentioned in the previous section satisfy the Laplace transform rule
\begin{equation}\label{24/09-1}
\int_{0}^{\infty} \E^{-st} t^{\beta-1} F_{\alpha, \beta}^{(n)}(\lambda t^{\alpha}) \D t = s^{-\beta} F_{\alpha, \beta}^{(n-1)}(\lambda s^{-\alpha}),
\end{equation}
where $\lambda$ is {{a real or complex}} constant. The proof of this formula for various representation of the MLR function can be found, e.g., in \cite{RGarra13,RGarrappa17} and also in Appendix \ref{appC}. {{After i}}nverting Eq. \eqref{24/09-1} and {{making the change of}} variable $s t = z^{1/\alpha}$ we arrive at
\begin{equation}\label{16/06-1}
F_{\alpha, \beta}^{(n)}(\lambda t^{\alpha}) = \frac{\alpha}{2\pi\!\I} \int_{L_{z}} \E^{z^{1/\alpha}} z^{\frac{1-\beta}{\alpha} - 1} F_{\alpha, \beta}^{(n-1)}(\ulamek{\lambda t^{\alpha}}{z}) \D z,
\end{equation}
where $L_{z}$ denotes the Bromwich contour with $\RE(z) > 0$. It is easy to see that both the direct and {{the}} inverse Laplace transforms in Eqs. \eqref{24/09-1} and \eqref{16/06-1}, respectively, reduce the value of $n$ in {{the}} MLR function to $n-1$. {{Thus, by using (\ref{16/06-1}) recursively one obtain nested integrals which allow to express $F_{\alpha, \beta}^{(n)}(\lambda t^{\alpha})$ in terms of $F_{\alpha, \beta}^{(1)}(\lambda t^{\alpha})$, namely}} the two-parameters ML function $E_{\alpha, \beta}(\lambda t^{\alpha})$.

\subsection{The toy model: the MRL function for $n=2$}

As a toy model we consider Eq. \eqref{16/06-1} for rational $\alpha = l/k$ ($0 < l/k < 1$), $n=2$, and $\lambda t^{l/k} = -x$. From Eq. \eqref{16/06-1} we can express the MLR function for $n=2$ as the contour integral of the MLR function for $n=1$, for which $F^{(1)}_{l/k, \beta}(-x) = E_{l/k, \beta}(-x)$. The integral form of $E_{\alpha, \beta}(-x)$ can be found in {{\cite[Eq. (6)]{KGorska18a}}} for $\gamma = 1$ which for rational $\alpha = l/k$ reads
\begin{align}\label{19/06-1}
\begin{split}
F_{\frac{l}{k}, \beta}^{(1)}(-x) & = E_{\frac{l}{k}, \beta}(-x) \\ & = \int_{0}^{\infty} \E^{-x u} u^{-1-\frac{k}{l}} g_{\frac{l}{k}, \beta}(u^{-k/l}) \frac{k  \D u}{l}. 
\end{split}
\end{align}
The auxiliary functions $g_{l/k, \beta}(\sigma)$ is given by Eq. (14) of \cite{KGorska18a}; their explicit form will be quoted later in this section. Substituting Eq. \eqref{19/06-1} into Eq. \eqref{16/06-1} for $n=2$ we have
\begin{align}\label{21/06-1}
\begin{split}
& F^{(2)}_{\frac{l}{k}, \beta}(-x) = \frac{(k/l)^{2}}{2\pi\!\I} \int_{L_{z}} \E^{z^{k/l}} z^{\frac{1-\beta}{l/k}-1} \\ & \qquad \times \left[\int_{0}^{\infty} \E^{-x u/z} u^{-1-\frac{k}{l}} g_{\frac{l}{k}, \beta}(u^{-k/l}) \D u\right]\! \D z.
\end{split}
\end{align}
Now, let $y = u/z$ be the new variable in the second integral of Eq. \eqref{21/06-1} and let us change the order of integrals. Then we end up with the Laplace transform representation of $F^{(2)}_{l/k, \beta}(-x)$ in which the weight function $m_{l/k, \beta}(2; y)$ is expressed as
\begin{align}\label{21/06-3}
\begin{split}
& m_{\frac{l}{k}, \beta}(2; y) = y^{-1 - k/l}\; \frac{(k/l)^{2}}{2\pi\!\I} \\ & \qquad \times \int_{L_{z}} \E^{z^{k/l}}\! z^{-1-\frac{k}{l}\beta} g_{\frac{l}{k}, \beta}[(y z)^{-k/l}] \D z \\
& \qquad = y^{-1 - \frac{1-\beta}{l/k}}\; \frac{k/l}{2\pi\!\I} \int_{L_{\eta}} \E^{\eta y^{-k/l}} \eta^{-1-\beta} g_{\frac{l}{k}, \beta}(\eta^{-1}) \D\eta,
\end{split}
\end{align}
where we set $\eta = (z y)^{k/l}$ and modify the contour $L_{z}$ onto $L_{\eta}$. Substituting the explicit form of the auxiliary function $g_{l/k, \beta}(\sigma)$
\begin{equation}\label{19/06-2}
g_{\frac{l}{k}, \beta}(\sigma) = \frac{\sqrt{k} l^{3/2-\beta}}{(2\pi)^{(k-l)/2}} \frac{1}{\sigma} G^{k, 0}_{l, k}\left(\frac{l^{l}}{k^{k} \sigma^{l}}\Big\vert {\Delta(l, \beta) \atop \Delta(k, 1)}\right),
\end{equation}
which for $\beta = 1$ is the one-sided L\'{e}vy stable distribution \cite{KGorska10, KGorska12}, and using the Eq. \eqref{22/06-A1} yields to 
\begin{align}\label{21/06-4}
\begin{split}
m_{\frac{l}{k}, \beta}(2; y) &= (2\pi)^{-\frac{1+k}{2} + l} \frac{k^{3/2}}{l^{2(\beta - \frac{1}{2})}} \frac{1}{y} \\ & \times G^{k, 0}_{2l, k}\left(\frac{l^{2l} y^{k}}{k^{k}}\Big\vert {\Delta(l, \beta), \Delta(l, \beta) \atop \Delta(k, 1)}\right)
\end{split}
\end{align}
and 
\begin{equation}\label{27/09-2}
2l \leq k.
\end{equation}

{
\begin{rem}
The explicit form of the auxiliary function $g_{\alpha, \beta}(\sigma)$ given by Eq. \eqref{19/06-2} for $\alpha = l/k$ comes from the Bromwich contour integral presented in Eq. (8) for $\gamma = 1$ of \cite{KGorska18a}, namely
\begin{equation}\label{27/08-1}
g_{\alpha, \beta}(\sigma) = \alpha y^{-\alpha} \int_{L} \E^{\sigma z - z^{\alpha}} \D z/(2\pi\!\I).
\end{equation}
For rational $\alpha = l/k$ it can be calculated by using \cite[Eq. (2.2.1.19)]{APPrudnikov-v5} and we get $g_{l/k, \beta}(\sigma)$ in the form of Eq. \eqref{19/06-2}. For arbitrary real $\alpha$ such that $0 < \alpha < 1$ we can employing \cite[Eq. (3)]{BStankovic70} reported in \cite[Eq. (F.2)]{FMainardi10} and we express $g_{\alpha, \beta}(\sigma)$ in the form of the generalized Wright function \eqref{12/06-4}:
\begin{equation}\label{27/08-2}
g_{\alpha, \beta}(\sigma) = \alpha \sigma^{-1-\alpha} {_{0}\Psi_{1}}\left(-\sigma^{-\alpha}\big\vert {- \atop (\beta, -\alpha)}\right).
\end{equation} 
\end{rem}
}

\subsection{The weight function $m_{l/k, \beta}(n; y)$}

Based on the considerations presented in the previous section for our toy model we can generalize Eqs. \eqref{21/06-4} and \eqref{27/09-2} thanks to the following result.
\begin{thm}\label{31/07-t1}
Let $n,l,k$ be integers such that $l/k < 1/n$. The MLR function  $F^{(n)}_{l/k, \beta}(-x)$ is the inverse Laplace transform
\begin{equation}\label{24/06-1}
F^{(n)}_{\frac{l}{k}, \beta}(-x) = \int_{0}^{\infty} \E^{-xy} m_{\frac{l}{k}, \beta}(n; y) \D y
\end{equation}
with the weight function
\begin{multline}\label{27/02-6}
m_{\frac{l}{k}, \beta}(n; y) = (2\pi)^{\frac{1-k}{2}-\frac{n(1-l)}{2}}  \frac{k^{3/2}}{l^{n(\beta-\frac{1}{2})}} \frac{1}{y} \\ \times G^{k, 0}_{nl, k}\bigg(\frac{l^{nl} y^{k}}{k^{k}}\Big\vert {\overbrace{\Delta(l, \beta), \ldots, \Delta(l, \beta)}^{n\,\, \text{times}} \atop \Delta(k, 1)} \bigg) .
\end{multline}
\end{thm}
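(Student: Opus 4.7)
The proof proceeds by induction on $n$, using the reduction formula \eqref{16/06-1} as the workhorse. For the base case $n=1$, I would verify that \eqref{27/02-6} reproduces the Laplace representation \eqref{19/06-1} of the two-parameter Mittag-Leffler function. Writing $m_{l/k,\beta}(1;y)=(k/l)\,y^{-1-k/l}g_{l/k,\beta}(y^{-k/l})$ and substituting the explicit Meijer-$G$ form \eqref{19/06-2}, the factor $y^{k/l}$ arising from $1/\sigma$ exactly cancels the $y^{-1-k/l}$, leaving $y^{-1}G^{k,0}_{l,k}\bigl(l^l y^k/k^k\bigm|\Delta(l,\beta);\Delta(k,1)\bigr)$ together with the constant $(2\pi)^{(l-k)/2}k^{3/2}/l^{\beta-1/2}$, which matches \eqref{27/02-6} at $n=1$ since $(1-k)/2-(1-l)/2=(l-k)/2$.

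For the inductive step, assume $F^{(n-1)}_{l/k,\beta}(-x)=\int_0^\infty \E^{-xu}m_{l/k,\beta}(n-1;u)\,\D u$ with the Meijer-$G$ form in \eqref{27/02-6} (with $n-1$ in place of $n$). Insert this into \eqref{16/06-1} with $\alpha=l/k$ and $\lambda t^\alpha=-x$, interchange the order of the Bromwich and Laplace integrals (justified under $l/k<1/n$ by absolute convergence, exactly as in the $n=2$ toy model), and substitute $y=u/z$; accounting for the Jacobian $z$ yields
\[
m_{l/k,\beta}(n;y)=\frac{k/l}{2\pi\I}\int_{L_z}\E^{z^{k/l}}z^{(1-\beta)k/l}\,m_{l/k,\beta}(n-1;yz)\,\D z.
\]
Now insert the Mellin--Barnes representation of the Meijer $G$ appearing in $m_{l/k,\beta}(n-1;yz)$. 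The argument $l^{(n-1)l}(yz)^k/k^k$ splits off a factor $z^{ks}$, so, after substituting $w=z^{k/l}$, the inner Bromwich integral in $z$ reduces via the Hankel identity $\frac{1}{2\pi\I}\int_L \E^w w^{-\nu}\D w=1/\Gamma(\nu)$ to $(l/k)/\Gamma(\beta-ls)$. Applying the Gauss multiplication formula $\Gamma(\beta-ls)=(2\pi)^{(1-l)/2}l^{\beta-ls-1/2}\prod_{i=0}^{l-1}\Gamma((\beta+i)/l-s)$, the reciprocal $1/\Gamma(\beta-ls)$ introduces \emph{exactly one additional} $\Delta(l,\beta)$ block among the numerator parameters of the resulting Meijer $G$, while the $l^{ls}$ combines with the pre-existing $(l^{(n-1)l}/k^k)^s$ to produce $(l^{nl}/k^k)^s$. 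Collecting the constants $(2\pi)^{(1-l)/2}/l^{\beta-1/2}$ promotes the prefactor from its $n-1$ to its $n$ form, closing the induction.

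The principal obstacle I foresee is analytic rather than algebraic: one must match the Bromwich and Mellin--Barnes contours at each step so that every deformation is legitimate, and verify that the interchange of the contour integral in $z$ with both the Laplace integral in $y$ and the Mellin--Barnes integral in $s$ is valid throughout. The hypothesis $l/k<1/n$ (equivalently $nl<k$) enters precisely here, as it is the condition under which $G^{k,0}_{nl,k}$ is an entire function of its argument and the iterated integrals converge absolutely. Once this analytic machinery is in place, the algebraic identities doing the substantive work (the Hankel inversion together with the Gauss multiplication formula) are entirely standard.
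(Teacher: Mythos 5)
Your proposal is essentially correct, but it takes a genuinely different route from the paper's own proof. The paper does not induct: it verifies the formula directly, by substituting the claimed Meijer $G$ form \eqref{27/02-6} into the forward Laplace integral, evaluating it with the tabulated formula \eqref{24/06-A1} (after inverting the argument via \eqref{14/03-A3}), expanding the resulting $G^{k,k}_{k,k+nl}$ into a finite sum of hypergeometric functions via \eqref{14/03-A4}, and matching term by term against the representation \eqref{6/06-3} of $F^{(n)}_{l/k,\beta}$; a separate second step derives the power series \eqref{7/06-20} of $m_{l/k,\beta}(n;y)$ and the bound $|m_{l/k,\beta}(n;y)|\le [\Gamma(\beta-l/k)]^{-n}\E^{y}$ to guarantee that the Laplace transform exists. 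Your induction via the inversion formula \eqref{16/06-1} is instead the systematic extension of the paper's $n=2$ ``toy model'' of Eqs.~\eqref{21/06-1}--\eqref{21/06-4}, which the authors present as motivation but do not promote to a proof. Your route is constructive (it explains where \eqref{27/02-6} comes from), and your identification of the key mechanism --- Hankel's formula $\frac{1}{2\pi\I}\int_L \E^{w}w^{-\nu}\D w = 1/\Gamma(\nu)$ producing $1/\Gamma(\beta+ls)$, then the Gauss multiplication formula \eqref{7/06-12} converting this into one extra $\Delta(l,\beta)$ block and promoting $l^{(n-1)l}$ to $l^{nl}$ --- is exactly right; your base case also checks out against \eqref{19/06-1}--\eqref{19/06-2}. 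The paper's route buys rigor more cheaply: it leans on a single tabulated integral and sidesteps the contour deformations and interchanges of the Bromwich, Mellin--Barnes and Laplace integrals that you correctly flag as the delicate point, and its Step 2 supplies the growth estimate that your argument would also need in order to justify those interchanges. Two small bookkeeping slips to fix when writing yours out: the constant emerging from the Gauss formula is $(2\pi)^{(l-1)/2}l^{1/2-\beta}$, not $(2\pi)^{(1-l)/2}/l^{\beta-1/2}$ (the $2\pi$ exponent must decrease the prefactor exponent by $(1-l)/2$ per step to reach $\frac{1-k}{2}-\frac{n(1-l)}{2}$), and the $k/l$ prefactor of the recursion cancels exactly against the Jacobian of $w=z^{k/l}$, so the inner integral is $1/\Gamma(\beta+ls)$ with no residual $l/k$ factor.
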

\begin{proof}
The proof consists of two steps. First we show that the Laplace transform integral of $m_{l/k, \beta}(n; y)$ leads to the MLR function, namely (\ref{24/06-1}) and hence we prove the necessary and sufficient condition for the existence of the Laplace transform, that is 
\begin{equation}\label{7/06-15}
|m_{\alpha, \beta}(n; y)| \leq A \E^{a y}, 
\end{equation}
where $0 < y < \infty$ and $A$ as well as $a$ being constants. \\

\noindent
{\em Step 1.} We substitute $m_{l/k, \beta}(n; y)$ given by Eq. \eqref{27/02-6} into the LHS of Eq. \eqref{24/06-1}. Thus, we get the Laplace transform of the Meijer $G$ function multiplied by a power function. Such Laplace transform may be {{evaluated by}} applying Eq. \eqref{24/06-A1} in which we invert the argument according to Eq. \eqref{14/03-A3}. That gives
\begin{align}\label{28/02-1}
\begin{split}
\int_{0}^{\infty}& \E^{-x y} m_{\frac{l}{k}, \beta}(n; y) \D y= (2\pi)^{1-k - \frac{n}{2}(l-1)} \frac{k}{l^{n(\beta - \frac{1}{2})}} \\
& \times G^{k, k}_{k, k + n l}\Big(\frac{x^{k}}{l^{n l}}\Big\vert {\Delta(k, 0) \atop \Delta(k, 0), \underbrace{\Delta(l, 1-\beta)\ldots \Delta(l,1-\beta)}_{n \,\,\text{times}}}\Big)
\end{split}
\end{align}
and 
\begin{equation}\label{10/06-1}
n l \leq k. 
\end{equation}
To get the MLR function we represent the RHS of Eq. \eqref{28/02-1} as the finite sum of the hypergeometric functions which is achieved {{after}} employing Eq. \eqref{14/03-A4}. The comparison of {{the}} obtained formula with the MLR function Eq. \eqref{6/06-3} completes the first step of the proof. \\

\noindent
{\em Step 2.} Let us now find the series representation of $m_{l/k, \beta}(n; y)$. From Eq. \eqref{14/03-A4} we can express Eq. \eqref{27/02-6} as the finite sum of the hypergeometric functions:
\begin{multline}\label{28/02-2a}
m_{\frac{l}{k}, \beta}(n; y) = \sum_{j=0}^{k-1} \frac{(-y)^{j}}{j! [\Gamma(1-b_{j})]^{n}} \\ \times {_{1+nl} F_{k}}\bigg({1, \overbrace{(a_{l}), \ldots, (a_{l})}^{n\,\, \text{times}} \atop \Delta(k, 1+j)}; (-1)^{nl-k} \frac{l^{nl} y^{k}}{k^{k}}\bigg),
\end{multline}
where $(a_{l}) = \Delta(l, b_{j})$ with $b_{j} = 1- \beta + \frac{l}{k}(1 + j)$. The coefficients in Eq. \eqref{28/02-2a} can be obtained using the Gauss multiplication formula for the Gamma function
\begin{equation}\label{7/06-12}
\Gamma(n \lambda) = (2\pi)^{\frac{1-n}{2}} n^{n\lambda - \frac{1}{2}} \prod_{i=0}^{n-1}\Gamma(\lambda + \ulamek{i}{n}). 
\end{equation}
Thereafter, we substitute the series representation of ${_{1+nl}F_{k}}$ given by Eq. \eqref{14/03-A5} into Eq. \eqref{28/02-2a}. Thus, we obtain two sums: one of them is finite and contains $k$ elements and it is indexed by $j$, $j=0, 1, \ldots, k-1$, and the another one is the series over $r$, $r=0, 1, \ldots$. For these two sums we use the splitting formula given by Eq. \eqref{6/06-1} according to which the index $kr + j$ is changed into $r$, $r=0, 1, \ldots$. It yields to
\begin{multline}\label{7/06-20}
m_{\frac{l}{k}, \,\beta}(n; y) = \sum_{r=0}^{\infty} y^{r} c_{r}(\ulamek{l}{k}, \beta, n) \,\,\, \text{with} \\ c_{r}(\ulamek{l}{k}, \beta, n) = \frac{(-1)^{r}}{r! [\Gamma(1-b_{r})]^{n}}.
\end{multline}

{{Thanks to}} the series representation of $m_{l/k, \,\beta}(n; y)$ it is easy to show {{when}} condition \eqref{7/06-15} is satisfied. {{To this}} purpose we use the triangular inequality which gives the necessary and sufficient condition
\begin{multline*}
|m_{\frac{l}{k}, \beta}(n; y)| \leq \sum_{r=0}^{\infty} y^{r}|c_{r}(\ulamek{l}{k}, \beta, n)| \\ 
 \leq \frac{1}{[\Gamma(\beta - \frac{l}{k})]^{n}}\sum_{r=0}^{\infty} \frac{y^{r}}{r!} \\
 = \frac{1}{[\Gamma(\beta - \frac{l}{k})]^{n}} \E^{y}. 
\end{multline*}
\end{proof}

\begin{rem}
Eq. \eqref{27/02-6} for $l < k$, $\beta > 0$, and $n = 2$ reconstructs Eqs. \eqref{21/06-4}. For $l < k$, $\beta > 0$, and $n = 1$ we have  $k y^{-1-k/l} g_{l/k, \beta}(y^{-k/l})/l$ where $g_{l/k, \beta}(\sigma)$ is given via Eq. \eqref{19/06-2}. For $l< k$ and $n = \beta = 1$ it leads to the Meijer $G$ representation of the one-sided L\'{e}vy stable probability distribution, to see that compare $\varPhi_{l/k}(\sigma) = l \sigma^{-1-l/k} m_{l/k, 1}(1; \sigma^{-l/k})/ k$ with Eq. (2) of \cite{KGorska10}. 
\end{rem}

The weight function $m_{l/k, \beta}(n; y)$ can be also expressed through the generalized Wright function. Recalling Eq. \eqref{28/02-2a} in which we use the series form of the generalized hypergeometric function with the Gauss multiplication formula for {{the}} Gamma function we have
\begin{multline*}
m_{\frac{l}{k}, \beta}(n; y) = \sum_{j=0}^{k-1} \frac{\sin^{n}(\pi b_{j})}{\pi^{n}} (-y)^{j} \\ \times {_{1+n}\Psi_{1}}\Bigg[(-1)^{nl+k} y^{k}\Big\vert {(1, 1), \overbrace{(b_{j}, l), \ldots, (b_{j}, l)}^{n\,\,\, \text{times}} \atop (1+j, k)}\Bigg] {{,}}
\end{multline*}
{{where parameters $b_{j}$ are the same introduced immediately after Eq. \eqref{28/02-2a}.}}

\subsection{Radius of Convergence}

Below we will find the interval $y\in[0, R)$ in which the power series $m_{l/k, \beta}(n; y)$ converges, i.e., we determine its radius of convergence $R = \lim_{r\to\infty} |c_{r}/c_{r+1}|$. It is calculated from Eq. \eqref{7/06-20}  with the help of the Stirling formula $\Gamma(z) \propto \sqrt{2\pi} z^{z - 1/2} \exp(-z)$ 
and reads 
\begin{multline}\label{3/06-1a}
R = \lim_{r\to\infty} \Big\vert(1+r)\frac{\Gamma[\beta - \frac{l}{k}(2+r)]}{\Gamma[\beta - \frac{l}{k}(1+r)]}\Big\vert^{n} \\
 = \lim_{r\to\infty} \frac{1+r}{|\beta - l (1+r)/k|^{ln/k}} \\
 = (k/l)^{\frac{l}{k}n} \lim_{r\to\infty} r^{1 - \frac{l}{k}n}.
\end{multline}
From the above we see that $R$ is infinite for $l n < k$; finite for and equal to $(k/l)^{l n/k}$ for $l n = k$, and zero for $ln > k$. Thus, under the considered condition $l n \leq k$ the function $m_{l/k, \beta}(n; y)$ is well-defined. 

\section{Nonnegative character of $m_{l/k, \beta}(n; y)$}\label{sec4}

From the Bernstein theorem \cite{RLSchilling12, DVWidder46, MMerkle14} we know that any CM function can be expressed as the Laplace transform of a nonnegative weight function. In Sec. \ref{sec3} we have found the Laplace transform representation of the MLR function {{with}} $m_{l/k, \beta}(n; y)$ as {{the}} weight function. Now, we will show that $m_{l/k, \beta}(n; y)$ is nonnegative for some values of $n$ and $\beta$.  We will consider three cases: $n l = k$ is presented in the part {\bf\em I}, $2 l < k$ in the part {\bf\em II}, and the general case $nl < k$ in the part {\bf\em III}. \\

\noindent
{\bf\em I.} For $ln = k$ where $n=2, 3, \ldots$ and $l, k$ are integers such that $l < k$ the radius of convergence {{of $m_{l/k, \beta}(n; y)$}} is finite and equals to $n$. If $n$ is an integer then we can consider only the case of $l = 1$ and $k = n$. This generalizes all possible choices of $l$ and $k$ because we take integers $n$ equal to $k/l$. The weight function $m_{l/k, \beta}(n; y)$ for $l=1$, $k = n$  can be represented as
\begin{equation}\label{3/06-2}
m_{1/n, \beta}(n; y) = (2\pi)^{\frac{1-n}{2}} \frac{n^{3/2}}{y} G^{\,n, 0}_{n, n}\bigg(\frac{y^{n}}{n^{n}}\big\vert {\overbrace{\beta, \ldots, \beta}^{n \,\,\, \text{times}} \atop \Delta(n, 1)}\bigg) \Theta(n-y),
\end{equation}
where the Heaviside function $\Theta(\cdot)$ is introduced to extend the domain of integration space on the positive semiaxis $[0, \infty)$. Obviously, it encodes the information about the finite radius of convergence: $m_{1/n, \beta}(n; y)=0$ for $y > n$. 

According to the Lemma 2 of \cite{DKarp12} the Meijer G-function function appearing in the RHS of Eq.\eqref{3/06-2} is nonnegative if $\boldsymbol{\beta} = (\beta, \ldots, \beta)$ ($\beta$ occurs $n$ times) is {\em weakly supermajorized} by $\boldsymbol{n} = \Delta(n, 1)$. Due to the Definition A.2 of \cite{AWMarshall11} or to the Eq. (15) of \cite{DKarp12} the sequence $\boldsymbol{B}=(b_{1}, \ldots, b_{n})$ is weakly supermajorized by the sequence $\boldsymbol{A} = (a_{1}, \ldots, a_{n})$ if $0 < b_{1} \leq \ldots \leq b_{n}$, $0 < a_{1}  \leq \ldots \leq a_{n}$, and 
\begin{equation}\label{29/06-1}
\sum_{i=0}^{N} a_{i} \leq \sum_{i=0}^{N} b_{i} \quad \text{for} \quad N = 1, 2, \ldots, n.
\end{equation}
In our case the elements of the sequence $\boldsymbol{A}$ are the ratios $i/n$ with $i =1, 2, \ldots, n-1$, and elements of the sequence $\boldsymbol{B}$ are all equal to $\beta$. For such chosen $\boldsymbol{A}$ and $\boldsymbol{B}$ the condition Eq. \eqref{29/06-1} has the form
\begin{equation}\label{4/06-1}
\sum_{i=1}^{N}\frac{i}{n} \leq \sum_{i=1}^{N} \beta \qquad \text{for} \quad N = 1, 2, \ldots, n,
\end{equation}
from which  $\beta \geq (N+1)/(2n)$, $N=1, 2, \ldots, n$.  For the largest allowed value of $N$, i.e. $N=n$, it yields to
\begin{equation}\label{4/06-2}
\beta\geq \frac{n+1}{2 n}
\end{equation}
which provides the condition for the parameter $\beta$ under which the function $m_{1/n, \beta}(n; y)$ is nonnegative. 

\begin{exa}
According to Eq. \eqref{3/06-2} the simplest case of $m_{1/n, \beta}(n; y)$ is that for $k = n = 2$ and $\beta = 1$ which agrees with the restriction Eq. \eqref{4/06-2} giving $\beta \geq 3/4$. For the above values of $n$, $l$, $k$, and $\beta$ the function $m_{1/2, 1}(2; y)$  is
\begin{equation}\label{16/03-2}
m_{1/2, 1}(2; y) = \frac{1}{\pi}\;{_{1}F_{0}}\left({1/2 \atop -}; \frac{y^{2}}{4}\right) = \frac{2}{\pi\sqrt{4-y^{2}}}
\end{equation}
for $0 < y < 2$ and zero for $y \geq 2$. The Laplace transform of $m_{1/2, 1}(2; y)$ is known (see Eq. (2.3.7.1) of \cite{APPrudnikov-v1}) 
\begin{equation}\label{16/03-3}
\int_{0}^{2} \frac{2 \E^{-x y} \D y}{\pi\sqrt{4 - y^{2}}} = {_{0}F_{1}}\left({- \atop 1}; x^{2}\right) - \frac{4x}{\pi}\; {_{1}F_{2}}\left({1 \atop \frac{3}{2}, \frac{3}{2}}; x^{2}\right)
\end{equation}
which reproduces the MLR for $\alpha = 1/2$, $\beta = 1$, and $n=2$, see Eq. \eqref{6/06-3} for considered values of parameters $\alpha$, $\beta$, and $n$. We emphasize the importance of the condition $\beta \geq (n+1)/(2n)$; if it is broken then the function $m_{l/k, \beta}(n; y)$ may become negative, e.g. by taking $\beta = 1/2$ we arrive at
\begin{equation}\label{4/06-10}
m_{1/2, 1/2}(2; y) = -\frac{y}{4\pi} {_{1}F_{0}}\left({3/2 \atop -}; \frac{y^{2}}{4}\right) = \frac{-2 y}{\pi (4-y^{2})^{3/2}},
\end{equation}
negative for $0 < y < 2$ and 0 for $y \geq 2$. \\

The cases of $n = 3$, $n = 4$, and $n = 5$ as well as the case of $n=2$ are presented in Fig. \ref{fig00}. In Fig. \ref{fig00} the red curve presents $m_{1/n, 1}(n; y)$ for $n=2$, the blue curve is for $n=3$, the green one for $n = 4$, and the orange one is for $n=5$. All functions are defined in finite domains and for $\beta = 1$ such value of $\beta$ automatically satisfied the condition \eqref{4/06-2}.
\begin{figure}[!h]
\includegraphics[scale=0.4]{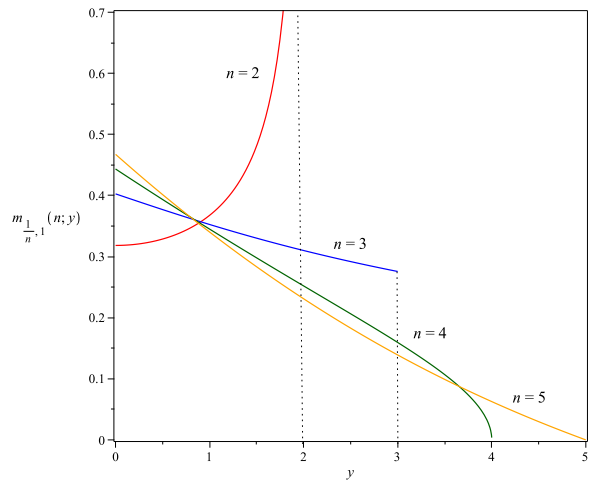}
\caption{\label{fig00} Plot of $m_{1/n, 1}(n; y)$ given by Eq. \eqref{3/06-2} for $n=2$ (the red curve), $n=3$ (the blue curve), $n=4$ (the green curve), and $n=5$ (the orange curve).}
\end{figure}
\end{exa}

\noindent
{\bf\em II.} We consider the nonnegativity of the toy model introduced  in Eq. \eqref{21/06-3}, i.e. the function $m_{l/k, 1}(2; y)$, $0 < l/k < 1/2$. The inversion of the second formula of Eq. \eqref{21/06-3},  with $\beta = 1$ and $y = u^{-l/k}$ being applied, reads
\begin{multline}\label{25/09-1}
\eta^{-2} g_{\frac{l}{k}, 1}(\eta^{-1}) = \eta^{-2} \varPhi_{\frac{l}{k}}(\eta^{-1}) \\ = \int_{0}^{\infty} \E^{-\eta u} \frac{l}{k} u^{-l/k} m_{\frac{l}{k}, 1}(2; u^{-l/k}) \D u
\end{multline}
where, as mentioned in the Section 3.1, $\varPhi_{\frac{l}{k}}(\eta^{-1})$ is the one-sided L\'{e}vy stable distribution. Next, we integrate of both side of Eq. \eqref{25/09-1} by $\exp[-(x/2)^2 \eta^{-1}]$ for $\eta >0 $. Setting $\eta^{-1} = \xi$ in the left-hand side in Eq. \eqref{25/09-1} leads to
\begin{multline}\label{12/01-1}
\int_{0}^{\infty} \E^{-(x/2)^2 \eta^{-1}} \eta^{-2} \varPhi_{\frac{l}{k}}(\eta^{-1}) \D\eta \\ = \int_{0}^{\infty} \E^{-(x/2)^2 \xi} \varPhi_{\frac{l}{k}}(\xi) \D\xi \\ = \exp[-(x/2)^{2l/k}]
\end{multline}
The {{integration}} of the right-hand side of Eq. \eqref{25/09-1} by $\int_{0}^{\infty} \E^{-(x/2)^2 \eta^{-1}} \D\eta$, where we apply $u = t^{2}${{, together with the change of }} the order of integrations{{, give }} 
\begin{multline}\label{12/01-2}
\int_{0}^{\infty} \frac{2l}{k} t^{-\frac{2 l}{k}} m_{\frac{l}{k}, 1}(2; t^{-2 l/k}) \left[\int_{0}^{\infty} \E^{-\eta t^2 - (x/2)^{2} \eta^{-1}} \D\eta\right] t \D t \\
= \sqrt{x} \int_{0}^{\infty} \frac{2 l}{k}  t^{-\frac{2 l}{k} - \frac{1}{2}} m_{\frac{l}{k}, 1}(2; t^{-2l/k}) \\ \times \sqrt{x t} K_{1}(xt) \D t.
\end{multline}
The integral in square bracket is equal to $x K_{1}(x t)/t$, where $K_{1}(xt)$ denotes the modified Bessel function of the second kind named also the McDonalds function. According to Eq. \eqref{25/09-1} we can compare Eq. \eqref{12/01-1} with \eqref{12/01-2}. The equality between these formulas yields to the Hankle-type transform \cite{KSMiller01, YuFLuchko00}:
\begin{multline}\label{13/01-1}
 x^{-\frac{1}{2}} \E^{-(x/2)^{\frac{2l}{k}}} = \int_{0}^{\infty} \frac{2 l}{k}  t^{-\frac{2 l}{k} - \frac{1}{2}} m_{\frac{l}{k}, 1}(2; t^{-2l/k}) \\ \times \sqrt{x t} K_{1}(xt) \D t.
\end{multline}
The completely monotonic character of the KWW function $\exp[-(x/2)^{2l/k}]$ for $0 < l/k < 1/2$ shown in \cite{HPollard46}. Moreover, the function with the negative power is also completely monotone. Then, their product is also completely monotonic. From {\bf Theorem 7} of \cite{KSMiller01}, see point 4, it appears that $t^{-2l/k - 1/2} m_{l/k, 1}(2; t^{-2l/k})$ is nonegative function. Because $t > 0$ then $t^{-2l/k - 1/2}$ and $m_{l/k, 1}(2; t^{-2l/k})$ are nonegative functions. That finish the proof of nonegative character of $m_{l/k, 1}(2; y)$, $y > 0$, under condition $2 l < k$.

\begin{exa}
Eq. \eqref{28/02-2a} for $\beta = l = 1$, $k = 3$, and $n=2$ gives
\begin{multline}\label{5/06-4}
m_{1/3, 1}(2; y) = \E^{-\frac{y^{3}}{27}}\left[ \frac{{_{1}F_{1}}\left({1/3 \atop 2/3}; \frac{y^{3}}{27}\right)}{[\Gamma(2/3)]^{2}} - \frac{3y [\Gamma(2/3)]^{2}}{4\pi^{2}}\right. \\ \left. \times {_{1}F_{1}}\left({2/3 \atop 4/3}; \frac{y^{3}}{27}\right)\right],
\end{multline}
where we have used Kummer's relation given by Eq. \eqref{5/06-0}. Eq. \eqref{5/06-4},  after applying to it the Eq. \eqref{5/06-1}, can be expressed in terms of the modified Bessel function of the second kind $K_{\nu}(x)$, $\nu\in\mathbb{R}$ 
\begin{equation}\label{5/06-5}
m_{1/3, 1}(2; y) = \frac{\sqrt{y}}{2\pi^{3/2}} \E^{-\frac{y^{3}}{54}} K_{1/6}(y^{3}/54).
\end{equation}
Because $K_{\nu}(x)$ is positive for $x>0$ then $m_{1/2, 1}(2; y)$ is also positive function for $0 < y < \infty$.
\end{exa}

\begin{exa} 
According to Eq. \eqref{28/02-2a} for $\beta = l = 1$, $k = 4$, and $n=2$ the function $m_{1/4, 1}(2; y)$ contains three hypergeometric functions ${_{1}F_{2}}$ which after using Eqs. \eqref{5/06-2} and \eqref{5/06-3} can be represented as
\begin{equation}\label{5/06-6}
m_{1/4, 1}(2; y) = \frac{1}{\sqrt{2}} \left[I_{1/4}(y^{4}/256) - \frac{y}{4} I_{-1/4}(y^{4}/256)\right]^{2},
\end{equation}
where $I_{\nu}(x)$'s, $\nu\in\mathbb{R}$  denote the modified Bessel function of the first kind.
Obviously, the LHS of the Eq. \eqref{5/06-6} is positive as a square of real-valued function.
\end{exa}

\noindent
{\bf\em III.} As the last case let us consider the $ln < k$ and $\beta > 0$ for which $m_{l/k, \beta}(n; y)$ is well-defined for $y\in[0, \infty)$. In this situation the asymptotics of the function$m_{l/k, \beta}(n; y)$ at $y=0$ is 
\begin{equation}\label{27/07-1}
\lim_{y\to 0} m_{l/k, \beta}(n; y) = [\Gamma(\beta - l/k)]^{-n}. 
\end{equation}
which can be deduced by taking the zero term in the series Eq. \eqref{7/06-20}. The asymptotic behavior of $m_{l/k, \beta}(n; y)$ for $y \gg 1$ can be estimated by using Eq. (7) of \cite{DKarp15} or task 1.18 on p. 41 of \cite{AMMathai10} being the special case of equation on page 289 of \cite{BLJBraaksma}. Thus, $m_{l/k, \beta}(n; y)$ for large $y$ can be approximated as 
\begin{equation}\label{3Jul2018}
m_{l/k, \beta}(n; y) \propto y^{\frac{nl - nk(\beta - 1/2)}{k-nl}} \exp\!\Big[\!-\!(k-nl)\! \left(\!\ulamek{l^{nl} y^{k}}{k^{k}}\!\right)^{\frac{1}{k-nl}} \Big],
\end{equation}
which goes to zero along the positive semi-axis. 

In this general situation the nonnegativeness of  $m_{l/k, \beta}(n; y)$ is shown by making the numerical calculation. In Figs. \ref{fig1} and \ref{fig2} are presented $m_{l/k, \beta}(n; y)$ for $l/k = 3/7$, $n=2$, $\beta = 1/2, 3/4, 1, 5/4$ and $l/k = 3/10$, $n=3$, $\beta = 1/3, 2/3, 1, 4/3$, respectively. Point out that these two examples were chosen in such a way that the condition $nl < k$ is kept. It can be observe that $m_{3/7, \beta}(2; y)$ and $m_{3/10, \beta}(3; y)$ for $\beta = 1/2$ have the negative parts for $y\in (0.086, 1.666)$ and $y\in (0.924, 3.409)$, respectively. 
\begin{figure}[!h]
\includegraphics[scale=0.4]{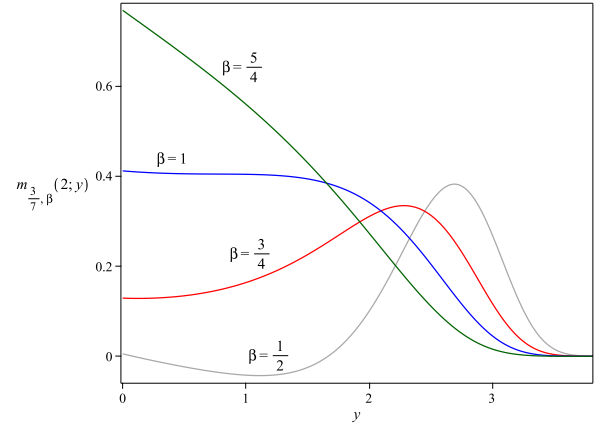}
\caption{\label{fig1} The plot of $m_{l/k, \beta}(2; y)$ for $l = 3$, $k=7$, $n=2$, and various parameter $\beta$, this is $\beta=1/2$ (the grey curve), $\beta=3/4$ (the red curve), $\beta=1$ (the blue curve), and $\beta=5/4$ (the green curve). }
\end{figure}
\begin{figure}[!h]
\includegraphics[scale=0.4]{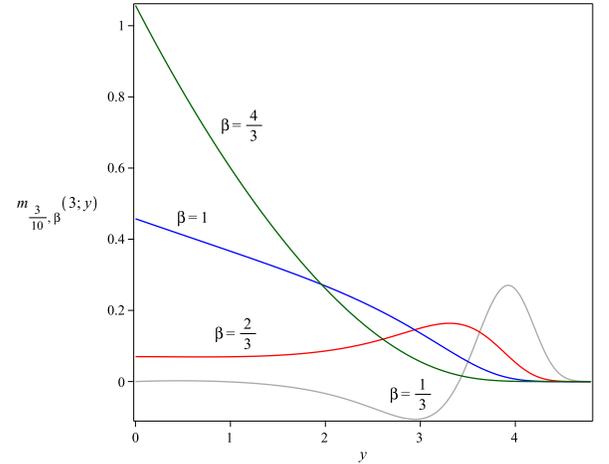}
\caption{\label{fig2} The plot of $m_{l/k, \beta}(3; y)$ for $l = 3$, $k=10$, $n=3$, and various parameter $\beta$, this is $\beta=1/3$ (the grey curve), $\beta=2/3$ (the red curve), $\beta=1$ (the blue curve), and $\beta=4/3$ (the green curve). }
\end{figure}

{{
It would be of interest, thus to give a definitive answer to the open question posed in \cite{RGarrappa17}, of finding the relationship which must satisfy $\alpha$ and $\beta$ in order to ensure the CM of the MLR function, namely the functional dependence $M_{n}(\alpha)$ such that $F_{\alpha,\beta}^{(n)}(-x)$ tuns out CM when $\beta\ge M_n(\alpha)$. For $n=1$, i.e. when $F_{\alpha,\beta}^{(1)}$ is the standard two-parameters ML function, it is known that this relationship is $\beta \ge \alpha$ with $0<\alpha \le 1$, thus $M_{1}(\alpha)=\alpha$ \cite{WRSchneider96}.

In the more general case, unfortunately, we are not able to explicitly give an analytical representation of this dependence but, thanks to the theoretical findings investigated in the paper, in particular Theorem \ref{31/07-t1}, and by means of some numerical procedures, we can approximate $M_{n}(\alpha)$ and give a graphical representation.

Indeed, a numerical procedure performed in variable precision arithmetic in Maple allow to evaluate the function $m_{\alpha, \beta}(n; y)$ in a sufficiently large interval for $y$ and for a wide range of $\alpha$ and $\beta$ and hence compute, thanks a binary search algorithm, for any $0<\alpha<1/n$ an approximation of the minimum value $M_n(\alpha)$ such that for $\beta \ge M_n(\alpha)$ it is $m_{\alpha, \beta}(n; y) \ge 0$ and hence $F_{\alpha,\beta}^{(n)}(-x)$ is CM.

The values $M_n(\alpha)$ obtained for $n = 1, 2, 3, 4$, and $5$ are presented in Fig. \ref{fig4}, where vertical dotted lines represent $1/n$ while the horizontal dotted lines are the limit values  $(n+1)/(2n)$ of Eq. (\ref{4/06-2}). The computation of each $M_n(\alpha)$ is not made on the whole interval $\alpha\in (0,1/n)$ since close to the upper bound $1/n$ the convergence of $m_{\alpha, \beta}(n; y)$ is very slow and numerically not stable.

\begin{figure}[!h]
\includegraphics[scale=0.46]{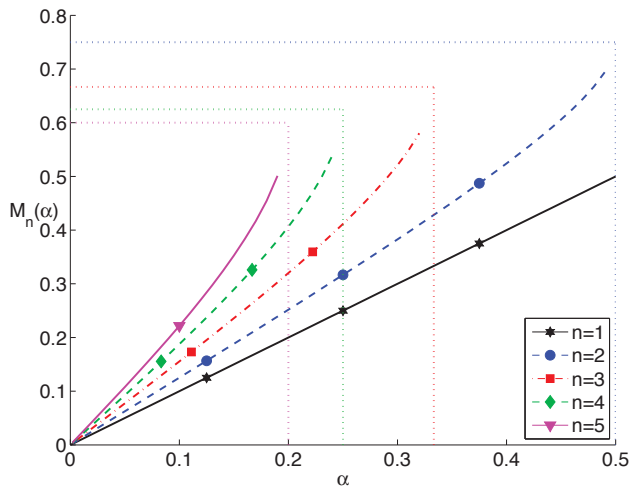}
\caption{\label{fig4} Computed bounds $M_n(\alpha)$ such that for $\beta \ge M_n(\alpha)$ it is $m_{\alpha, \beta}(n; y) \ge 0$ (and hence $F_{\alpha,\beta}^{(n)}(-x)$ is CM). }
\end{figure}

As expected, $M_n(\alpha)$ tends to be $(n+1)/(2n)$ when $\alpha \to 1/n$ and for $\alpha = 1/n$ we find the result described in Sec. 4 point {\bf \em I}. The straight line is only for $n=1$ which agrees with the already known results for the ML function.
}}

\section{\normalsize{Conclusions}}\label{sec5}

It is known that special cases of the MLR function are completely monotonic when $n=1$ and, hence, there exists their unique Laplace transform with a nonnegative weight function. Inspired by this fact we expressed the MLR function $F_{\alpha, \beta}^{(n)}(\cdot)$, $n =1, 2, \ldots$ and rational $\alpha = l/k$ such that $0 < l/k < 1$, as the Laplace transform of the weight function $m_{\alpha, \beta}(n; -)$. Thereafter, for proving the nonnegativity of $m_{\alpha, \beta}(n; -)$ we considered three cases: (i) $ln = k$, (ii) $2l < k$ and $\beta = 1$, and (iii) $ln < k$. In the first two cases, namely (i) and (ii), we were able to prove the completely monotonicity of  $F_{1/n, \beta}^{(n)}(-x)$ and $F_{l/k, 1}^{(n)}(-x)$.

From the weakly supermajorized theorem \cite{DKarp12, AWMarshall11} appears that the weight function $m_{1/n, \beta}(n; y)$ is nonnegative for $\beta \geq (n+1)/(2n)$. This claim is in agreement with the fact that $m_{1/n, \beta}(n; y)$ has a finite radius of convergence which means that we have to deal with the Laplace transform  defined on a finite sector. Such Laplace transform corresponds to the Hausdorff moment problem (the moment problem defined in the finite sector) which contains the positive defined weight function \cite{DVWidder46} related to $m_{1/n, \beta}(n; y)$. In the case (ii) with the help of {\bf Theorem 7} point 4 of \cite{KSMiller01} we were able to prove the nonnegativity of  $m_{l/k, \beta}(n; y)$ for $n = 2$ and $\beta = 1$. Here, we specify two examples $l/k = 1/3$ and $l/k = 1/4$ for which the weight function $m_{l/k, \beta}(n; y)$ can be presented as the positive standard function which contains the Bessel function of the second kind and the square of the difference of the Bessel functions of the first kind. The last case, i.e. $ln < k$, is more difficult. For this case we were able to consider only the asymptotics of $m_{l/k, \beta}(n; y)$ and show their nonnegative behaviour 
numerically. 

We believe that for the proof of the nonnegative character of $m_{l/k, \beta}(n; y)$ it will be helpful its representation in terms of nested integrals. This form can be derived by using the Laplace transform of the MRL function. Every times when we apply Eq. \eqref{16/06-1} we reduce the parameter $n$ of the MLR function by one such that $n$ times used this formula allow one to present the $n$ MLR function as the nested integrals which started with $F_{\alpha, \beta}^{(1)}(-x)$, i.e. the two-parameters ML function $E_{\alpha, \beta}(-x)$. This two-parameters ML function is given by Eq. \eqref{19/06-1} and it is defined through $g_{\alpha, \beta}(u)$ function. The integral representation of $g_{l/k, \beta}(u)$ can be find in \cite{KGorska18a}, see Eq. (7) for $\gamma=1$ and rational $\alpha = l/k$: 
\begin{equation}\label{26/09-1}
g_{\frac{l}{k}, \beta}(u) = \frac{u^{-1-l/k}}{2\pi\!\I} \int_{L_{\tilde{z}}} \E^{z^{k/l}} \E^{-z u^{-l/k}} z^{\frac{1-\beta}{l/k}} \D z.
\end{equation}
Substituting Eq. \eqref{26/09-1} into $n$ times used Eq. \eqref{16/06-1} in which $\lambda t^{l/k} = -x$ we can obtain that 
\begin{multline}\label{26/09-2}
F_{\frac{l}{k}, \beta}^{(n)}(-x) = \int_{0}^{\infty} \E^{-x y} \left(\frac{k/l}{2\pi\!\I} \int_{L_{\xi_{n}}} \D\xi_{n} \E^{\xi_{n}^{k/l}} \xi_{n}^{\frac{1-\beta}{k/l}} \times \ldots\right. \\ \left. \times\;  \frac{k/l}{2\pi\!\I} \int_{L_{\xi_{1}}} \D\xi_{1} \E^{\xi_{1}^{k/l}} \xi_{1}^{\frac{1-\beta}{k/l}} \E^{-\xi_{n}\ldots \xi_{1} y}\right) \D y.
\end{multline}
The Bromwich contour $L_{\xi_{j}}$, $j=1, 2, \ldots, n$, is with $\Re(\xi_{i}) > 0$. Eq. \eqref{26/09-2}  can be proved by induction. The circle bracket defined the weight function $m_{l/k, \beta}(n; y)$. 

\section*{\normalsize{Acknowledgments}} 

K.G and A.H. were supported by the NCN, OPUS-12, program no. UMO-2016/23/B/ST3/01714. Moreover, K.G. thanks for support from NCN (Poland), Miniatura 1, program no. 2017/01/X/ST3/00130. The work of R.G. is supported under the Cost Action CA 15225.

{The authors would like to thank an anonymous referee for the drawing the attentions to Refs. \cite{FMainardi10, BStankovic70}.}

\appendix
\renewcommand{\theequation}{\Alph{section}.\arabic{equation}}

\section{The Meijer $G$ function} \label{appA}

The Fox $H$ function and its special case the Meijer $G$ function \cite{APPrudnikov-v3} are defined as an inverse Mellin-Barnes transform as follows: the Fox $H$ function as
\begin{multline}\label{2/08-1}
H^{m, n}_{p, q} \left[z\Big\vert {[a_{p}, A_{p}] \atop [b_{q}, B_{q}]}\right] \okr \frac{1}{2\pi\!\I}\int_{\gamma_{L}}\!\!\! \D s\; x^{-s}\\ \times \frac{\prod_{i=1}^{m}\Gamma(b_{i} + B_{i}s) \prod_{i=1}^{n}\Gamma(1-a_{i}-A_{i}s)}{\prod_{i=n+1}^{p}\Gamma(a_{i}+A_{i}s) \prod_{i=m+1}^{q}\Gamma(1-b_{i}-B_{i}s)},
\end{multline}
and if we take $A_{i} = 1$, $i=1, 2, \ldots, p$, as well as $B_{j} = 1$, $j=1, 2, \ldots, q$, we have the Meijer $G$ function
\begin{equation}\label{14/03-A1} 
H^{m, n}_{p, q} \left[z\Big\vert {[a_{p}, 1] \atop [b_{q}, 1]}\right] = G^{m, n}_{p, q} \left(z\Big\vert {(a_{p}) \atop (b_{q})}\right) 
\end{equation}
where empty products are taken to be equal to one. In Eqs. \eqref{2/08-1} and \eqref{14/03-A1} the parameters are subject of conditions
\begin{align*}
\begin{split}
& z\neq 0, \quad 0 \leq m \leq q, \quad 0 \leq n \leq p; \\
& a_{i}\in\mathbb{C}, \,\,\, A_{i} > 0, \quad i=1, \ldots, p; \\
& b_{i}\in\mathbb{C}, \,\,\, B_{i} > 0, \quad i=1, \ldots, q; \\
& [a_{p}, A_{p}] = (a_{1}, A_{1}), \ldots, (a_{p}, A_{p}); \\ 
& [b_{q}, B_{q}] = (b_{1}, B_{1}), \ldots, (b_{q}, B_{q}); \\
& (a_{p}) = a_{1}, a_{2}, \ldots, a_{p}; \quad (b_{q}) = b_{1}, b_{2}, \ldots, b_{q}.
\end{split}
\end{align*}
For a full description of integration contour $\gamma_{L}$, several properties and special cases of the $G$ and $H$ function see \cite{APPrudnikov-v3}. 

Below we quote the explicit formulas of some properties of the Meijer $G$ function which are widely used in the paper:   

\noindent
{\bf (-)} from Eq. (8.2.2.14) of \cite{APPrudnikov-v3} we have the formula  
\begin{equation}\label{14/03-A3}
G^{m, n}_{p, q}\left(z\Big\vert {(a_{p}) \atop (b_{q})}\right) = G^{n, m}_{q, p}\left(\frac{1}{z}\Big\vert {1- (b_{q}) \atop 1-(a_{p})}\right),
\end{equation}
which invert the argument of the Meijer $G$ function; 

\noindent
{\bf (-)}  the formula transforming the Meijer $G$ function into the generalised hypergeometric function for $p \leq q$ has the form of Eq.(8.2.2.3) of \cite{APPrudnikov-v3} and looks like
\begin{align}\label{14/03-A4}
\begin{split}
&G^{m, n}_{p, q}\left(z\Big\vert {(a_{p}) \atop (b_{q})}\right) \\ & = \sum_{j=1}^{m} \frac{\Big[\prod_{i=0}^{n-1}\Gamma(b_{i}-b_{j})\Big]' \prod_{i=0}^{n-1}\Gamma(1+b_{j}-a_{i})}{\prod_{i=n+1}^{p}\Gamma(a_{i}-b_{j}) \prod_{i=m+1}^{q}\Gamma(1+b_{j}-b_{i})} \\
& \times z^{b_{j}} {_{p}F_{q-1}} \left({1 + b_{j} - (a_{p}) \atop 1+b_{j} - (b_{q}) ''}; (-1)^{p-m-n} z\right), \\
\end{split}
\end{align}
where $b_{i} - b_{j} \neq 0, \pm 1, \ldots$ for $i\neq j$, $i, j = 1, 2, \ldots, m$; \\ $1 + b_{j} - (b_{q})'' = 1 + b_{j} - b_{1}, \ldots, 1 + b_{j} - b_{j-1}, 1+b_{j}-b_{j+1}, \ldots, 1+b_{j}-(b_{q})$; and $\Big[\prod_{i=0}^{n-1}\Gamma(b_{i}-b_{j})\Big]' = \prod_{i=0}^{j-1} \Gamma(b_{i}-b_{j}) \prod_{i=j+1}^{n-1} \Gamma(b_{i}-b_{j})$. \\

\noindent
{\bf (-)} The inverse Laplace transform of Meijer $G$ function is given by Eq. (3.38.1.1) of \cite{APPrudnikov-v5}, namely
\begin{align}\label{22/06-A1}
\begin{split}
\frac{1}{2\pi\!\I}& \int_{L}  \E^{\,\sigma\,x} \sigma^{-\lambda} G^{m, n}_{p, q} \left(\omega p^{l/k}\Big\vert {(a_{p}) \atop (b_{q})}\right) = \frac{(2\pi)^{\frac{l-1}{2} - c(k-1)} k^{\mu}}{l^{\lambda-1/2} x^{1-\lambda}} \\ 
& \times G^{k m, k n}_{k p + l, k q} \left(\frac{\omega^{k} l^{l}}{k^{k(q-p)} x^{l}}\Big\vert {\Delta(k, (a_{p})), \Delta(l, \lambda) \atop \Delta(k, (b_{q}))}\right),
\end{split}
\end{align}
where $L$ is the Bromwich contour, $\mu = \sum_{j=1}^{q} b_{j} - \sum_{j=1}^{p} a_{j} + (p-q)/2 + 1$, and $c = m+n -(p+q)/2$. 

\noindent
{\bf (-)}  The Laplace integration of Meijer $G$ function is given in Eq. (2.24.3.1) of \cite{APPrudnikov-v3} 
\begin{align}\label{24/06-A1}
\begin{split}
\int_{0}^{\infty}& \E^{- \sigma x} x^{\alpha-1} G^{m, n}_{p, q}\left(\omega x^{l/k}\Big\vert {(a_{p}) \atop (b_{q})}\right) \D x = \frac{k^{\mu} l^{\alpha-\frac{1}{2}} \sigma^{-\alpha}}{(2\pi)^{\frac{l-1}{2}+ c(k-1)}} \\
& \times G^{k m, k n+l}_{k p + l, k q}\left(\frac{\omega^{k} l^{l}}{\sigma^{l} k^{k(q-p)}}\Big\vert {\Delta(l, 1-\alpha), \Delta(k, (a_{p})) \atop \Delta(k, (b_{q}))}\right),
\end{split}
\end{align}
where $c$ and $\mu$ are introduced below Eq. \eqref{22/06-A1}. Here we quote only this conditions which appeared in the considered case: $p \leq q$ and $c \geq 0$.

\section{The generalised hypergeometric function}\label{appB}

The generalised hypergeometric function ${_{p}F_{q}}\Big({(c_{p}) \atop (d_{q})}; x\Big)$, $x\in\mathbb{R}$, is defined via the series \cite{APPrudnikov-v3}:
\begin{equation}\label{14/03-A5}
{_{p}F_{q}}\left({(c_{p}) \atop (d_{q})}; x\right) \okr \sum_{r=0}^{\infty} \frac{x^{r}}{r!} \frac{(c_{1})_{r} (c_{2})_{r}\cdots (c_{p})_{r}}{(d_{1})_{r} (d_{2})_{r}\cdots (d_{q})_{r}},
\end{equation}
where $(c)_{r}$ is the Pochhammer symbol (rising factorial) given by $\Gamma(c + r)/\Gamma(c)$. The empty Pochhammer symbol is equal to one. \\

Below we itemize some properties which are used in the paper: \\
\noindent
{\bf (-)} the cancelation formula given by Eq. (7.2.3.7) of \cite{APPrudnikov-v3}, according to which the same terms in nominator and in denominator can be cancelled: 
\begin{equation}\label{7/06-2}
{_{p}F_{q}}\left({(a_{p-r}), (c_{r}) \atop (b_{q-r}), (c_{r})}; z\right) = {_{p-r}F_{q-r}}\left({(a_{p-r}) \atop (b_{q-r})}; z\right);
\end{equation} 

\noindent
{\bf (-)} Kummer's relation transforming ${_{1}F_{1}}$ into another ${_{1}F_{1}}$: 
\begin{equation}\label{5/06-0}
{_{1}F_{1}}\left({a \atop b}; z\right) = \E^{z} {_{1}F_{1}}\left({b-a \atop b}; -z\right),
\end{equation}
see Eq. (7.11.1.2) of \cite{APPrudnikov-v3}. \\

Because of in the text we extensively applied relations between the generalised hypergeometric function and the special function we list them below: \\
\noindent
{\bf (-)} combining Eq. (7.11.1.21) for $b = 2a$ of \cite{APPrudnikov-v3} with Eq. (7.11.4.5) of \cite{APPrudnikov-v3} we get 
\begin{multline}\label{5/06-1}
\qquad \frac{\Gamma(1-2a)}{\Gamma(1-a)} {_{1}F_{1}}\left({a \atop 2a}; z\right) \\ + \frac{\Gamma(2a-1)}{\Gamma(a)} z^{1-2a} {_{1}F_{1}}\left({1-a \atop 2(1-a)}; z\right) \\
= \frac{z^{1/2-1}}{\sqrt{\pi}} \E^{z/2} K_{a - 1/2}(z/2)
\end{multline}
with $K_{\nu}(\sigma)$ being the modified Bessel function of the second kind; and 

\noindent
{\bf (-)} for special chosen lists of upper and lower of parameters in ${_{1}F_{2}}$ can be transform into the modified Bessel function of the first kind $I_{\nu}(\sigma)$, $\nu\in\mathbb{R}$. The use of Eq. (7.14.1.7) of \cite{APPrudnikov-v3} gives
\begin{equation}\label{5/06-2}
{_{1}F_{2}}\left({a \atop a+1/2, 2a}; z\right) = [\Gamma(a + \ulamek{1}{2})]^{2} \left(\frac{z}{2}\right)^{1-2a} I_{a-1/2}^{2}(z),
\end{equation}
and Eq. (7.14.1.9) of \cite{APPrudnikov-v3} reads
\begin{equation}\label{5/06-3}
{_{1}F_{2}}\left({1/2 \atop b, 2-b}; z\right) = \frac{\pi(1-b)}{\sin(b\pi)} I_{1-b}(z) I_{b-1}(z);
\end{equation}

\section{The proof that Eq. \eqref{6/06-3} satisfies Eq. \eqref{24/09-1}} \label{appC}

The Laplace transform of $t^{\beta-1} F_{l/k, \beta}^{(n)}(\lambda t^{\alpha})$, where $\lambda$ is a complex or real constant and the MLR function is given via Eq. \eqref{6/06-3}, can be written as
\begin{align}\label{7/06-1}
\begin{split}
&\int_{0}^{\infty} \E^{-s t} t^{\beta-1} F_{\frac{l}{k}, \beta}^{(n)}(\lambda t^{l/k}) \D t = \sum_{j=0}^{k-1} \frac{\lambda^{j}}{[\Gamma(\beta + \frac{l}{k}j)]^{n}} \\
&\;\; \times \int_{0}^{\infty}\!\! \E^{-s t} t^{\frac{l}{k}j + \beta-1} \\ & \;\; \times {_{1}F_{nl}}\bigg({1 \atop \underbrace{\Delta(l, \beta + \ulamek{l}{k}j), \ldots, \Delta(l, \beta + \ulamek{l}{k}j)}_{n\,\,\, \text{times}}}; \frac{\lambda^{k} t^{l}}{l^{nl}}\bigg),
\end{split}
\end{align}
where we changed the order of the integral and the finite sum. The integral in the RHS of Eq. \eqref{7/06-1} can be calculated expliticely by employing Eq. (7.525.1) of \cite{ISGradshteyn07}. Due to it we get the generalised hypergeometric function of the type ${_{l+1}F_{nl}}$ with the upper list of parameters contains $1$ and $\Delta(l, \beta + \frac{l}{k}j)$, and the lower list of parameters with $n l$ elements, namely $n$ times repeated $\Delta(l, \beta + \frac{l}{k}j)$. Then, according to Eq. \eqref{7/06-2}, we cancel the same one term $\Delta(l, \beta + \frac{l}{k}j)$ from upper and lower list of parameters. That yields to the know formula being the Laplace transform of the MLR function, this is
\begin{align}\label{7/06-3}
\begin{split}
& \int_{0}^{\infty}\!\! \E^{-s t} t^{\beta-1} F_{\frac{l}{k}, \beta}^{(n)}(\lambda t^{l/k}) \D x = \sum_{j=0}^{k-1} \frac{(\lambda s^{-\frac{l}{k}})^{j}}{[\Gamma(\beta + \frac{l}{k}j)]^{n-1}} \\
& \times {_{1}F_{(n-1)l}}\bigg({1 \atop \underbrace{\Delta(l, \beta + \ulamek{l}{k}j), \ldots, \Delta(l, \beta + \ulamek{l}{k}j)}_{n-1\,\,\, \text{times}}}; \frac{(\lambda s^{-l/k})^{k}}{l^{(n-1)l}}\bigg) \\
& = s^{-\beta} F_{\frac{l}{k}, \beta}^{(n-1)}(\lambda s^{-l/k}).
\end{split}
\end{align}
The obtained equality can be treated as another proof of the Laplace transform presented in Eq. (3.1) of \cite{RGarrappa17} or Eq. (3.2) of \cite{RGarra13}.

\bigskip \smallskip

 \it

 \noindent

$^{a}$ H. Niewodnicza\'{n}ski Institute of Nuclear Physics, Polish Academy of Sciences, 
ul. Eljasza-Radzikowskiego 152, PL 31342 Krak\'{o}w, Poland  \\ [4pt]
e-mail: katarzyna.gorska@ifj.edu.pl, andrzej.horzela@ifj.edu.pl \\[6pt]

$^{b}$ Department of Mathematics, University of Bari "Aldo Moro", Via Orabona n. 4 - 70125 Bari, Italy -
{{Member of the INdAM Research group GNCS}}\\[4pt]
e-mail:  roberto.garrappa@uniba.it




\begin{thebibliography}{99}

\normalsize

\bibitem{VVAnh03} V. V. Anh and R. McVinish, Completely monotone property of fractional Green function. {\em Fract. Calc. Appl. Anal.} {\bf 6}, No. 2 (2003) 157--173.

\bibitem{BLJBraaksma} B. L. J. Braaskma, Asymptotic expansions and analytic continuation for a class of Barnes-integrals. {\em Composito Math.} {\bf 15}, (1964) 239--341.

\bibitem{RGarra13} R. Garra and F. Polito, On some operators involving Hadamard derivatives. {\em Integr. Trans. Spec. Func.} {\bf 24}, No. 10 (2013), 773--778.

\bibitem{GarraGarrappa} R. Garra and R. Garrappa, The Prabhakar or three parameter Mittag-Leffler function: theory and application, {\em Commun. Nonlinear Sci. Numer. Simul.} {\bf 56}, (2018), 314-329.

\bibitem{RGarrappa17} R. Garrappa, S. Rogosin, and F. Mainardi, {On a generalized three-parameter Wright function of the Le Roy type}. {\em Fract. Calc. Appl. Anal.} {\bf 20}, No. 5 (2017), 1196--1215.

\bibitem{KGorska18a} K. G\'{o}rska, A. Horzela, and A. Lattanzi, {The completely monotonic character of the three parameters Mittag-Leffler function $E_{\alpha, \beta}^{\gamma}(-x)$}. {\tt arXiv:1811.10441}.

\bibitem{KGorska12} K. G\'{o}rska and K. A. Penson, {L\'{e}vy stable distributions via associated integral transform}. {\em J. Math. Phys.} {\bf 53}, (2012), 053302.

\bibitem{KGorska10} K. A. Penson and K. G\'{o}rska, {Exact and Explicit Probability Densities for One-Sided L\'{e}vy Stable Distributions}. {\em Phys. Rev. Lett.} {\bf 105}, (2010), 210604.

\bibitem{ISGradshteyn07} I. S. Gradhteyn and I. M. Ryzhik, {\em Table of Integrals, Series and Products}, 7th ed., A. Jeffrey and D. Zwillinger (Eds), Academic Press (2007).

\bibitem{DKarp15} D. B. Karp, {Representations and inequalities for generalized hypergeometric functions}. {\em J. Math. Sci.} {\bf 207}, (2015), 885--897.

\bibitem{DKarp12} D. Karp and E. Prilepkin, {Hypergeometric functions as generalized Stiltjes transforms}. {\em J. Math. Anal. Appl.} {\bf 393} (2012) 348--359.

\bibitem{VKiryakova10} V. Kiryakova, {The multi-index Mittag-Leffler functiuons as an important class of special functions of fractional calculus}. {\em Comput. Math. Appl.} {\bf 59} (2010) 1885--1895.

\bibitem{YuLuchko99} Yu. Luchko, {Operational method in fractional calculus}. {\em Frac. Calc. Appl. Anal.} {\bf 2} (1999) 463--488.

\bibitem{YuFLuchko00} Yu.F. Luchko and V.S. Kiryakova, {Hankel type integral transforms connected with the hyper-Bessel differential operators}. {\em Algebraic Analysis and Related Topics} {\bf 53} (2000) 155--165.

\bibitem{FMainardi15} F. Mainardi and R. Garrappa, {On complete monotonicity of the Prabhakar function and non-Debye relaxation in dielectrics}. {\em J. Comp. Phys.} {\bf 293} (2015) 70--80.

\bibitem{FMainardi10} F. Mainardi, {\em Fractional Calculus and Waves in Linear Viscoelasticity}. Imperial College Press, London (2010). 

\bibitem{AWMarshall11} A. W. Marshall, I. Olkin, and B. C. Arnold, {\em Inequalities: Theory of Majorization and Its Applications, 2nd ed.} Springer, New York, (2011).

\bibitem{AMMathai10} A.M. Mathai, R. K. Saxena, and H. J. Haubold, {\em The $H$-Function. Theory and Applications}. Springer, New York (2010).

\bibitem{MMerkle14} M. Merkle, {Completely Monotone Functions: A Digest}. In: {\em Analytic Number Theory, Approximation Theory, and Special Functions}, , Springer, New York (2014), DOI: https://doi.org/10.1007/978-1-4939-0258-3\_12

\bibitem{KSMiller97} K. S. Miller and S. G. Samko, {A Note on the complete monotonicity of the generalized Mittag-Leffler function}. {\em Real Analysis Exchange} {\bf 23}(2) (1997-1998), 753-755.

\bibitem{KSMiller01} K. S. Miller and S. G. Samko, {Completely monotonic functions}.{ \em Int. Trans. Spec. Fun.} {\bf 12}, No. 4 (2001), 389--402.

\bibitem{TPogany18} T. Pog\'{a}ny, {Integral form of Le Roy hypergeometric function}.{ \em Int. Trans. Spec. Fun.} {\bf 29}, No. 7 (2018), 580--584.

\bibitem{HPollard46} H. Pollard, {The representation of $\E^{-x^{\lambda}}$ as a Laplace integral}. {\em Bull. Amer. Math. Soc.} {\bf 52}, (1946), 908.

\bibitem{HPollard48} H. Pollard, {The completely monotonic character of the Mittag-Leffler function $E_{\alpha}(-x)$}. {\em Bull. Amer. Math. Soc.} {\bf 54}, (1948), 1115--1116.

\bibitem{APPrudnikov-v1} A. P. Prudnikov, Yu. A. Brychkov, and O. I. Marichev, {\em Integrals and Series, vol. 1: Elementary Functions}, Gordon and Breach, Amsterdam (1998).

\bibitem{APPrudnikov-v3} A. P. Prudnikov, Yu. A. Brychkov, and O. I. Marichev, {\em Integrals and Series, vol. 3: More Special Functions}. FizMatLit, Moscow (2003).

\bibitem{APPrudnikov-v5} A. P. Prudnikov, Yu. A. Brychkov, and O. I. Marichev, {\em Integrals and Series, vol. 5: Inverse Laplace Transform} Gordon and Breach, New York (1992). 

\bibitem{BStankovic70} B. Stankovic, {On the function of E. M. Wright}. {\em Publ. Inst. Math. Beograd} {\bf 10}, No. 24 1970, 113--124.

\bibitem{RLSchilling12} R. L. Schilling, R. Song, and Z. Vondra\u{c}ek, {\em Bernstein Functions: Theory and Applications, 2nd ed.}. De Gruyter, Berlin/Boston, (2012).

\bibitem{WRSchneider96} W. R. Schneider, {Completely Monotone Generalized Mittag-Leffler Function}. {\em Expo. Math.} {\bf 14}, (1996), 003--016.

\bibitem{Tomovski} \v Z. Tomovski, T. K. Pog\'any, H. M. Srivastava, {Laplace type integral expression for a certain three-parameter family of generalized Mittag-Leffler functions with application involving complete monotonicity}. {\em J. Franklin Inst.} {\bf 351}, (2014), 5437--5454.

\bibitem{DVWidder46} D. V. Widder, {\em The Laplace Transform}. Princeton Mathematical Series, Princeton (1946).

\end{thebibliography}
\end{document}